\newtheorem{theorem}{Theorem}%[section]
\begin{document}
\title{Critical digraphs with few vertices}
\author{Mat\v ej Stehl\'ik}
%\thanks{The second author was partially supported by ANR project Stint
%(ANR-13-BS02-0007), ANR project GATO (ANR-16-CE40-0009-01), and by LabEx
%PERSYVAL-Lab (ANR-11-LABX-0025).}
\address{Laboratoire G-SCOP, Universit\'e Grenoble Alpes, France}
\email{matej.stehlik@grenoble-inp.fr}
%\date{\today}

\begin{abstract}
  We show that every $k$-dichromatic vertex-critical digraph on at
  most $2k-2$ vertices has a disconnected complement.
  This answers a question of Bang-Jensen et al., and generalises a
  classical theorem of Gallai on undirected vertex-critical graphs.
\end{abstract}
\maketitle

\section{Introduction}

Neumann-Lara~\cite{NL82} defined the \emph{dichromatic number} $\chi(G)$ of a
digraph $G$ as the smallest number or colours needed to colour the vertices of
$G$ so that no colour class induces a directed cycle.
A digraph $G$ with $\chi(G)=k$ is said to be \emph{$k$-critical} (resp.\
\emph{$k$-vertex-critical}) if every proper subdigraph (resp.\ every proper induced
subdigraph) $G'\subset G$ satisfies $\chi(G')<k$.
The \emph{complement} of a digraph $G$ is the digraph $\overline G$
such that $V(\overline G)=V(G)$ and $(u,v) \in E(G)$ if and only if $(u,v) \notin E(G)$.

Bang-Jensen, Bellitto, Schweser and Stiebitz~\cite[Question~3]{BJBSS19+} asked
whether there exists a $k$-critical digraph on at most $2k-2$ vertices with a
connected complement.
The purpose of this note is to answer their question in the negative, by proving
the following theorem.

\begin{theorem}
  \label{thm:main}
  If $G$ is a $k$-vertex-critical digraph on at most $2k-2$ vertices, then the complement
  $\overline G$ is disconnected.
\end{theorem}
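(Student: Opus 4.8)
The plan is to argue by contradiction: assume $\overline G$ is connected and either produce a colouring of $G$ with fewer than $k$ colours or force $G$ to have at least $2k-1$ vertices. The starting point is a dictionary between colourings and the complement. A pair $\{u,v\}$ induces an acyclic subdigraph of $G$ precisely when it is not a digon, i.e.\ when $uv$ is an edge of the underlying graph of $\overline G$; more generally, a matching of size $m$ in $\overline G$ yields a partition of $V(G)$ into $m$ acyclic pairs and $n-2m$ singletons, hence a colouring with $n-m$ colours. Since $\chi(G)=k$, no such colouring beats $k$, so $\nu(\overline G)\le n-k$. As a warm-up I would also record the standard fact that each vertex has in- and out-degree at least $k-1$ (otherwise a missing colour could be assigned to it without closing a directed cycle), which already confirms that $\overline G$ has small degrees.

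First I would feed $\nu(\overline G)\le n-k$ into the Tutte--Berge formula. As $n\le 2k-2$, the matching deficiency of $\overline G$ is $n-2\nu(\overline G)\ge 2k-n\ge 2$, so there is a set $A\subseteq V(G)$ whose deletion leaves at least $|A|+2$ components $K_1,\dots,K_m$. Crucially, two vertices in different components are non-adjacent in $\overline G$, hence joined by a digon in $G$; thus $G-A$ is the ``digon-join'' $K_1*\cdots*K_m$, and its dichromatic number is additive, $\chi(G-A)=\sum_i\chi(G[K_i])$. If $A=\emptyset$ we are already done, since then $\overline G$ itself is disconnected.

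The engine of the vertex count is that the relevant components, coming from the Gallai--Edmonds decomposition, are factor-critical in $\overline G$. For such a component $K$, deleting any vertex leaves a graph with a perfect matching in $\overline G$, i.e.\ a partition of $K$ minus a vertex into acyclic pairs; colouring the deleted vertex separately then gives $\chi(G[K])\le(|K|+1)/2$, equivalently $|K|\ge 2\chi(G[K])-1$. Summing over the components, and adding back the (perfectly matchable, hence efficiently colourable) even part together with the set $A$, I would try to show that the total number of vertices is at least $2k-1$, contradicting the hypothesis.

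The hard part will be the final accounting, and precisely the role of $A$. The naive estimate $\chi(G)\le\chi(G-A)+|A|$ is too lossy, and indeed there are \emph{non-critical} configurations (for instance a bidirected clique with a few pendant vertices) in which $A\ne\emptyset$ and the counts balance. This is exactly where full vertex-criticality must enter: deleting any single vertex of $A$ must drop the dichromatic number to $k-1$, which should forbid the components from all being small (bidirected-clique-like), and, equivalently, should let me recolour one of the deficiency vertices --- a set that is pairwise joined by digons --- into an enlarged acyclic class, saving a colour and contradicting $\chi(G)=k$. Making this absorption argument work in every case, thereby forcing $A=\emptyset$, is the crux; once $A=\emptyset$, the disconnection of $\overline G$ is immediate.
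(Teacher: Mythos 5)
Your plan has the right skeleton and the same toolbox as the actual proof: translate acyclic pairs into edges of the underlying graph of $\overline G$, bound the matching number, invoke the Gallai--Edmonds/Tutte--Berge structure to get a set $A$ and at least $|A|+2$ factor-critical components, observe that distinct components are pairwise joined by digons so that $\chi(G-A)$ is additive over components, and finally use vertex-criticality to force $A=\emptyset$. The deficiency count $n-2\nu(\overline G)\ge 2k-n\ge 2$ and the conclusion once $A=\emptyset$ are both correct.

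However, there is a genuine gap, and you have located it yourself without closing it: the step that forces $A=\emptyset$. The deeper problem is that in your setup this step cannot be closed, because the dictionary between matchings of $\overline G$ and colourings of $G$ is only an inequality, $\chi(G)\le n-\nu(\overline G)$: colour classes of size at least $3$ are invisible to matchings. Consequently, applying Gallai--Edmonds to all of $\overline G$ only yields \emph{upper} bounds $\chi(G[D_i])\le\tfrac12(|D_i|+1)$ and $\chi(G[C])\le\tfrac12|C|$, whereas to conclude $\chi(G-A)\ge\chi(G)$ (and hence $A=\emptyset$ by criticality) you need these as \emph{lower} bounds. They can fail badly: a factor-critical component $D_i$ of $\overline G$ may induce in $G$ an acyclic digraph (e.g.\ a transitive tournament, whose complement contains a spanning complete graph), so $\chi(G[D_i])=1$ no matter how large $D_i$ is, and your accounting collapses. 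The missing idea is to fix an \emph{extreme} optimal colouring --- one minimising the number of singleton classes --- set aside the vertices $W$ lying in classes of size at least $3$, and run Gallai--Edmonds only on the remaining vertices $U$, where the matching--colouring correspondence is exact by optimality, so that $\chi(G[C])=\tfrac12|C|$ and $\chi(G[D_i])=\tfrac12(|D_i|+1)$ hold with equality. Extremeness (minimality of singletons) is then also what proves, via a local exchange, that every vertex of $D_i$ forms a digon with every vertex of $W$, which is needed for the additivity of $\chi(G-A)$ across $W\cup C$ and the $D_i$'s. Your proposal never introduces an extreme colouring, so neither of these two facts is available, and the ``absorption argument'' you hope will rescue the accounting has no foothold.
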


This generalises a classical theorem of Gallai~\cite{Gal63} on undirected vertex-critical
graphs; other proofs were given in~\cite{Mol99,Ste03}.
All the proofs of Gallai's theorem rely on the simple fact that a colouring of an undirected graph is
equivalent to a clique cover of its complement.
Since there is no such equivalence in digraphs, generalising the theorem to directed
graphs may initially seem out of reach.
Upon closer inspection, however, it becomes clear that the central role
is played by colour classes of size $1$ and $2$.
Exploiting the fact that two vertices induce an acyclic subdigraph of
$G$ if and only if they induce at least one arc in $\overline G$,
we are able to give a short proof of Theorem~\ref{thm:main} based on matching theory.

\section{The proof}

In what follows, a \emph{colouring} of a digraph $G=(V,E)$ is a partition
$\mathcal P$ of $V$ such that every part (called a \emph{colour class})
induces an acyclic subgraph.
A colouring $\mathcal P$ of $G$ is \emph{extreme} if it is optimal (i.e.,
$\mathcal P$ has $\chi(G)$ colour classes), and has the minimum number of
singleton colour classes among all optimal colourings.
Given a subset $X \subseteq V$, we denote the restriction of $\mathcal P$
to $G[W]$ by $\mathcal P[W]$.
Given two disjoint digraphs $G_1=(V_1,E_1)$ and $G_2=(V_2,E_2)$, the
\emph{disjoint union} of $G_1$ and $G_2$ is defined as
\[
  G_1 + G_2=\big(V_1 \cup V_2, E_1 \cup E_2\big).
\]

The key tool in our proof is the \emph{Gallai--Edmonds Theorem}~\cite{Edm65,Gal63,Gal64};
see also~\cite{LP86,Sch03}.
Given an undirected graph $H$, the \emph{Gallai--Edmonds decomposition}
of $H$ is defined as follows.
Denote by $D(H)$ the set of vertices missed by at least one maximum matching,
$A(H)$ the set of vertices in $V(H) \setminus D(H)$ with at least one neighbour in $D(H)$,
and $C(H)=V(H)\setminus (A(H)\cup D(H))$.

The following theorem can be found in~\cite[Theorem~3.2.1]{LP86}.
\begin{theorem}[The Gallai--Edmonds Theorem]
  \label{thm:g-e}
  If $H$ is a graph and $V(H)$ is partitioned into $A(H),C(H),D(H)$ as above, then
  \begin{enumerate}
    \item every maximum matching of $H$ contains a near-perfect matching
      of each component of $H[D(H)]$ and a perfect matching of $H[C(H)]$;
    \item every maximum matching of $H$ is of size $\tfrac12\big(|V(H)|+|A(H)|-c\big)$,
      where $c$ is the number of connected components of $H[D(H)]$.
  \end{enumerate}
\end{theorem}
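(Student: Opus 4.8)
The plan is to derive both conclusions from the Tutte--Berge theory of deficiency, with one genuinely hard input, Gallai's Lemma, doing the real work. Write $\nu(H)$ for the maximum matching size, $\mathrm{def}(H)=|V(H)|-2\nu(H)$ for the deficiency, and $o(\cdot)$ for the number of odd components. I will use Berge's theorem (a matching is maximum if and only if it admits no augmenting path) and the Tutte--Berge formula $\mathrm{def}(H)=\max_{S\subseteq V(H)}\big(o(H-S)-|S|\big)$ as known classical facts. The one wholly elementary observation I start from is that, by the definition of $A(H)$, no vertex of $C(H)$ has a neighbour in $D(H)$; hence $H$ has no edge between $C(H)$ and $D(H)$, deleting $A(H)$ breaks $H$ into the components of $H[C(H)]$ together with those of $H[D(H)]$, and every matching edge meeting $D(H)$ either stays inside one component of $H[D(H)]$ or runs to $A(H)$.

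The heart of the proof is \textbf{Gallai's Lemma}: every component of $H[D(H)]$ is factor-critical, and so in particular of odd order. I would obtain this from the classical statement that a connected graph all of whose vertices are inessential is factor-critical, proved by contradiction. Observe first that if every vertex of a connected graph is inessential and $\mathrm{def}=1$, then each vertex is the unique vertex missed by some maximum matching, so the graph is already factor-critical; thus a non-factor-critical example forces $\mathrm{def}\ge 2$, and some maximum matching $M$ misses two vertices $u,u'$. Choosing such a co-missed pair at minimum distance and following a shortest $u$--$u'$ path, the minimality of the distance is exactly what forces the path to be $M$-alternating, hence $M$-augmenting, contradicting the maximality of $M$. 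Passing from this lemma to the components of $H[D(H)]$ is handled by the stability of the decomposition under deleting a vertex of $A(H)$ (deletion leaves $D$ and $C$ fixed, removes one vertex of $A$, and raises the deficiency by one), which I would set up as a short auxiliary lemma and iterate. \emph{Getting the minimal-distance path-chasing exactly right is the step I expect to be the main obstacle of the whole argument.}

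With factor-criticality in hand, the counting runs as follows. Since each of the $c$ components $D_1,\dots,D_c$ of $H[D(H)]$ is odd, we have $o(H-A(H))\ge c$, and the Tutte--Berge formula gives the lower bound $\mathrm{def}(H)\ge c-|A(H)|$. For the matching inequality I construct a matching of deficiency exactly $c-|A(H)|$: form the bipartite graph $B$ with parts $A(H)$ and $\{D_1,\dots,D_c\}$, joining $a$ to $D_i$ when $a$ has a neighbour in $D_i$; match $A(H)$ into pairwise distinct components through $B$; extend each such edge $a d_i$ by a perfect matching of $D_i-d_i$ (available since $D_i$ is factor-critical); take a near-perfect matching of every component receiving no $A(H)$-vertex; and adjoin a perfect matching of $H[C(H)]$. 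This misses precisely the $c-|A(H)|$ vertices left over in the components that received no $A(H)$-vertex, so $\mathrm{def}(H)=c-|A(H)|$ and $\nu(H)=\tfrac12\big(|V(H)|+|A(H)|-c\big)$, which is assertion~(2). Since $o(H-A(H))-|A(H)|\le\mathrm{def}(H)=c-|A(H)|$ we in fact get $o(H-A(H))=c$, so $A(H)$ is a Tutte--Berge maximiser whose odd components are exactly the $D_i$ and whose even components make up $H[C(H)]$. The standard description of maximum matchings relative to a maximising barrier then forces \emph{every} maximum matching to match $A(H)$ into distinct odd components, to match each $D_i$ near-perfectly, and to match $H[C(H)]$ perfectly; this is assertion~(1).

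Two inputs to the construction remain to be justified, and I expect both to be routine next to Gallai's Lemma. First, Hall's condition for $B$ follows from $A(H)$ being a Tutte--Berge maximiser: a Hall-violating set $S\subseteq A(H)$ could be used to produce a vertex set strictly beating $A(H)$ in $\max_S\big(o(H-S)-|S|\big)$, which is impossible. Second, that $H[C(H)]$ admits the required perfect matching: every vertex of $C(H)$ is essential in $H$, and the stability lemma shows this essentiality is inherited by $H[C(H)]$, so $D(H[C(H)])=\emptyset$ and $\mathrm{def}(H[C(H)])=0$. Assembling these with the count above yields both parts of the theorem.
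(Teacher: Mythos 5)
The paper offers no proof of this statement at all: it quotes the Gallai--Edmonds Theorem from Lov\'asz--Plummer \cite[Theorem~3.2.1]{LP86}, so your proposal must be judged against the classical argument, whose architecture (stability lemma + Gallai's lemma + a deficiency count) you have in fact correctly identified. But as written your proof contains a genuine circularity. You justify Hall's condition for the auxiliary bipartite graph $B$ by saying a Hall violator would ``beat $A(H)$'' in $\max_S\big(o(H-S)-|S|\big)$, ``which is impossible.'' Beating the \emph{value of $A(H)$} contradicts nothing: at that point in your argument you only have the lower bound $\mathrm{def}(H)\geq c-|A(H)|$, and a Hall-violating $S\subseteq A(H)$ merely shows $o\big(H-(A(H)\setminus S)\big)-|A(H)\setminus S|>c-|A(H)|$, perfectly consistent with the Tutte--Berge formula unless you already know $\mathrm{def}(H)=c-|A(H)|$ --- which is exactly the equality your Hall-based construction is supposed to establish. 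So the construction assumes its own conclusion. The repair is sitting in your own toolkit: the stability lemma you invoke for factor-criticality says $\mathrm{def}(H-a)=\mathrm{def}(H)+1$ for $a\in A(H)$ with $D$ and $C$ unchanged; iterating gives $\mathrm{def}(H-A(H))=\mathrm{def}(H)+|A(H)|$, and since $H-A(H)$ is the disjoint union of $H[C(H)]$ (deficiency $0$, because $D(H-A(H))=D(H)$ keeps every $C$-vertex essential) and the factor-critical components $D_1,\dots,D_c$ (deficiency $1$ each), additivity of deficiency over components yields $\mathrm{def}(H)=c-|A(H)|$ with no construction and no Hall's theorem at all. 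Assertion (2) follows, $A(H)$ is then a genuine maximiser, and assertion (1) and Hall's condition both drop out of the standard barrier count, as you correctly describe.

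The second problem is the mechanism you sketch for Gallai's lemma, which you rightly flag as the crux but whose stated reason fails. Minimality of $\mathrm{dist}(u,u')$ over co-missed pairs forces every \emph{internal vertex} of a shortest $u$--$u'$ path to be $M$-covered; it does \emph{not} force the path to be $M$-alternating. Already at distance $2$ the middle vertex $t$ may be matched to a vertex off the path, and the path $u,t,u'$ contains no $M$-edge whatsoever, so no augmenting path appears. The genuine argument requires a second extremal choice: take an internal vertex $t$ of the shortest path (covered by $M$, yet inessential by hypothesis), choose a maximum matching $N$ missing $t$ with $|M\cap N|$ maximum, and extract the contradiction from the alternating component of the symmetric difference $M\,\triangle\,N$ containing $t$, together with the distance minimality. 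Finally, you describe the stability lemma as ``a short auxiliary lemma''; it is standard but not slight --- it is where Berge's augmenting-path theorem actually does its work --- so in a complete write-up it would need a real proof, not a remark.
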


\begin{proof}[Proof of Theorem~\ref{thm:main}]
  Let $G=(V,E)$ be a $k$-vertex-critical graph on at most $2k-2$ vertices.
  Fix an extreme $k$-colouring $\mathcal P$ of $G$;
  observe that there must be at least two singleton colour classes in $\mathcal P$.
  Let $U \subseteq V$ be the set of all vertices belonging to
  colour classes of size $1$ or $2$, and set $W=V\setminus U$.
  
  Define the simple undirected graph
  \[
    H=\big(U,\big\{uv:\{(u,v),(v,u)\} \not\subseteq E\big\}\big),
  \]  
  and consider the Gallai--Edmonds decomposition of $H$.
  Namely, let $D$ be the set of vertices of $H$ missed by at least one maximum matching,
  $A$ the set of vertices in $U \setminus D$ adjacent to $D$ in $H$,
  and $C=U\setminus(A \cup D)$.
  Let $c$ be the number of components of $H[D]$; we denote their vertex sets by
  $D_1, \ldots, D_c$.
  
  For every vertex $u \in D$, let $M_u$ be a maximum matching of $H$ which misses $u$.
  This corresponds to an extreme colouring $\mathcal M_u$ of $G[U]$: the unmatched vertices
  correspond to the singleton colour classes, and edges of $M_u$ correspond to colour classes
  of $\mathcal M_u$ of size $2$.
  We can now define the extreme colouring $\mathcal P_u$ of $G$, having $\{u\}$ as a colour class, by
  \[
    \mathcal P_u=\mathcal P[W] \cup \mathcal M_u.
  \]
  
  By Theorem~\ref{thm:g-e}, every maximum matching $M_u$ of $H$ contains a perfect
  matching of $H[C]$ and a near-perfect matching of each $H[D_i]$.
  This immediately implies that
  \begin{align}
    \chi(G[C])  &=\tfrac12|C|, \label{eq:chi(G[C])}\\
    \chi(G[D_i])&=\tfrac12(|D_i|+1), \text{ for every } 1\leq i \leq c. \label{eq:chi(G[D_i])}
  \end{align}
  Moreover, since $M_u$ misses at least two vertices of $U$, we must have $c \geq 2$.
  
  Now pick any $1 \leq i \leq c$, and let $u\in D_i$ and $v \in V\setminus (A\cup D_i)$.
  If $v \in C \cup (D \setminus D_i)$, then $u$ and $v$ are in different components
  of $H-A$, so $uv \notin E(H)$ and therefore $(u,v) \notin \overline E$ and $(v,u) \notin \overline E$.
  Now suppose that $v \in W$.
  Suppose for a contradiction that $(u,v) \in \overline E$ or $(v,u) \in \overline E$.
  Consider the extreme colouring $\mathcal P_u$ of $G$ having $\{u\}$ as a colour class,
  and consider the colouring $\mathcal P'=\mathcal P_u[V \setminus \{u,v\}]\cup\{u,v\}$.
  Since $(u,v) \notin E$ or $(v,u) \notin E$ (or both), the colour class $\{u,v\}$
  does not induce a directed cycle, so the colouring $\mathcal P'$ is proper, optimal, and has fewer
  singleton colour classes than $\mathcal P_u$, contradicting the hypothesis that
  $\mathcal P_u$ was extreme.
  Hence, $(u,v) \notin \overline E$ and $(v,u) \notin \overline E$ for all $u \in D_i$
  and all $v \in V\setminus (A\cup D_i)$.
  In other words, we have shown that
  \begin{equation}
    \label{eq:G-A}
    \overline G-A = \overline G[W\cup C] + \overline G[D_1] + \cdots + \overline G[D_c].
  \end{equation}
  This fact, in conjunction with~\eqref{eq:chi(G[C])} and~\eqref{eq:chi(G[D_i])},
  gives
  \begin{align}
  \begin{split}\label{eq:chi(G-A)}
    \chi(G-A)&=\chi(G[W\cup C])+\sum_{i=1}^c \chi(G[D_i])\\
             &=\chi(G[W])+\chi(G[C])+\sum_{i=1}^c \chi(G[D_i])\\
             &=\chi(G[W])+\tfrac12\big(|C|+|D|+c\big).
  \end{split}
  \end{align}
  
  On the other hand, by Theorem~\ref{thm:g-e}, every maximum matching
  of $H$ has size $\tfrac12\big(|U|+|A|-c\big)$, so
  \[
    \chi(G[U])=|U|-\tfrac12\big(|U|+|A|-c\big)=\tfrac12\big(|C|+|D|+c\big),
  \]
  and it follows that
  \begin{equation}
  \label{eq:chi(G)}
    \chi(G)=\chi(G[W])+\chi(G[U])=\chi(G[W])+\tfrac12\big(|C|+|D|+c\big).
  \end{equation}
  Comparing~\eqref{eq:chi(G-A)} and~\eqref{eq:chi(G)}, we see that $\chi(G-A)=\chi(G)$.
  Since $G$ is vertex-critical, we must have $A=\emptyset$.
  Substituting this into~\eqref{eq:G-A}, and recalling that $c\geq 2$, we conclude
  that $\overline G$ has at least two components.
  This completes the proof.
\end{proof}

\bibliographystyle{plain}

\end{document}